\documentclass[11pt]{article}

\usepackage[a4paper,margin=3cm]{geometry}
\usepackage{amsmath,amssymb,amsthm,mathtools}
\usepackage{bm}
\usepackage{booktabs}
\usepackage{enumitem}
\usepackage{graphicx}
\usepackage{microtype}
\usepackage[numbers,sort&compress]{natbib}
\usepackage{tabularx}
\usepackage{xcolor}
\usepackage[hidelinks]{hyperref}

\title{Averaged Extensions of Golomb's Triangular Recursion:\\
Critical Invariance and Supercritical Constraints}
\author{
Marco Mantovanelli\\
Independent Researcher\\[1mm]
\texttt{marco@mantovanelli.de}\\[1mm]
\href{https://orcid.org/0009-0002-0631-293X}{ORCID: 0009-0002-0631-293X}
}
\date{}

\hypersetup{
  pdftitle={Averaged Extensions of Golomb's Triangular Recursion: Critical Invariance and Supercritical Constraints},
  pdfauthor={Marco Mantovanelli},
  pdfsubject={Averaged meta-Fibonacci recursions},
  pdfkeywords={meta-Fibonacci recursion, Golomb triangular recursion, nested recursion, slow growth, power mean}
}

\newtheorem{theorem}{Theorem}[section]
\newtheorem{lemma}[theorem]{Lemma}
\newtheorem{proposition}[theorem]{Proposition}
\newtheorem{corollary}[theorem]{Corollary}
\theoremstyle{definition}
\newtheorem{definition}[theorem]{Definition}
\newtheorem{conjecture}[theorem]{Conjecture}
\theoremstyle{remark}
\newtheorem{remark}[theorem]{Remark}

\newcommand{\floor}[1]{\left\lfloor #1\right\rfloor}
\newcommand{\R}{\mathbb{R}}

\begin{document}

\maketitle

\begin{abstract}
For an integer $m\ge 1$ and a parameter $\alpha>0$, consider the nested
recursion
\[
Q_{\alpha,m}(n)
=
1+\floor{
\frac{\alpha}{m}\sum_{j=1}^{m}
Q_{\alpha,m}\bigl(n-Q_{\alpha,m}(n-j)\bigr)
},
\qquad n>m,
\]
with $Q_{\alpha,m}(1)=\cdots=Q_{\alpha,m}(m)=1$.
For $m=1$ and $\alpha=1$, this is Golomb's non-homogeneous triangular
recursion.  We prove that its canonical triangular solution is preserved,
up to an initial index shift, by every finite arithmetic averaging length.
More generally, the same exact solution is generated by any aggregator that
obeys a simple local floor-lock condition.  This includes all power means of
finite order, including the harmonic and geometric cases, as well as the
minimum and positively weighted quasi-arithmetic means.  The maximum, which
lies outside this class for $m\ge2$, has a different but explicit block law.
Consequently, every value $k\ge2$ occurs
exactly $k$ times in the floor-admissible class, and
\[
Q_{1,m}(n)
=
\floor{\frac{1+\sqrt{1+8(n-m)}}{2}}
\sim \sqrt{2n}
\qquad(n>m).
\]
For $0<\alpha<1$, the sequence is identically one.  Near criticality, for
$\alpha=1+\delta$ with $0<\delta<(2m-1)^{-1}$, we determine the exact first
departure time, which is of order $\delta^{-2}$.  We also prove a finite-step
breakdown criterion for large $\alpha$ and a conditional slope theorem:
any globally defined solution with a limiting density in $(0,1)$ must have
slope $1-\alpha^{-1}$.  Exact-arithmetic computations support, but do not
prove, a supercritical linear-growth regime.
\end{abstract}

\medskip
\noindent\textbf{Keywords:}
meta-Fibonacci recursion; Golomb triangular recursion; nested recursion;
slow-growth sequence; power mean; phase transition.

\medskip
\noindent\textbf{MSC 2020:}
11B37 (primary), 39A12 (secondary).

\section{Introduction}

Meta-Fibonacci recursions are integer recursions whose arguments depend on
earlier values of the sequence itself.  Hofstadter's $Q$-sequence,
\[
Q(n)=Q(n-Q(n-1))+Q(n-Q(n-2)),
\]
is the best-known example \cite{hofstadter}.  Its apparently irregular
behavior contrasts with several structured nested recursions studied in the
literature, including slow and monotone families
\cite{tanny,higham-tanny,balamohan-kuznetsov-tanny,callaghan-chew-tanny},
tree-solvable and quasi-polynomial families
\cite{isgur-kuznetsov-tanny,celaya-ruskey,fox,fox-symbolic}, and diluted
variants \cite{deane-gentile,deane-gentile-subsets}.

The starting point of the present work is Golomb's non-homogeneous triangular
recursion
\begin{equation}\label{eq:golomb-triangular}
g(n)=1+g(n-g(n-1)),
\qquad g(1)=1.
\end{equation}
Its canonical slow solution is
\[
1,\;2,2,\;3,3,3,\;4,4,4,4,\ldots,
\]
so the frequency of the value $k$ is exactly $k$.  This solution and its
closed form are classical in the theory of the Golomb recursion
\cite{barbeau-chew-tanny,isgur-kuznetsov-tanny,sunohara-tanny}.
It should not be confused with Golomb's self-describing sequence
\cite{vardi,petermann}, in which the frequency of $k$ is the value of the
sequence at $k$; recent finite-memory variants of that different object are studied in
\cite{cloitre-almost-golomb,cloitre-beatty}.

We study the two-parameter averaged extension
\begin{equation}\label{eq:main-recursion-intro}
Q_{\alpha,m}(n)
=
1+\floor{
\frac{\alpha}{m}\sum_{j=1}^{m}
Q_{\alpha,m}\bigl(n-Q_{\alpha,m}(n-j)\bigr)
},
\end{equation}
initialized by $m$ ones.  At $m=1$ and $\alpha=1$,
equation~\eqref{eq:main-recursion-intro} reduces exactly to
\eqref{eq:golomb-triangular}.  The triangular block itself is therefore tied
to a known Golomb recursion.  What is new is its exact robustness under
arbitrarily long finite averaging and general floor-admissible aggregation,
together with a separate exact law for the maximum when $m\ge2$ and an exact
first supercritical departure time.

This paper makes four principal contributions.
\begin{enumerate}[label=\textup{(\roman*)},leftmargin=2.2em]
\item At $\alpha=1$, arithmetic averaging of every finite length preserves
the triangular orbit exactly, up to the prescribed initial shift.
\item The same conclusion holds for every floor-admissible aggregator,
including all power means of finite order and the minimum.
\item For $m\ge2$, the maximum aggregator lies outside the floor-admissible
class but has a different, completely explicit transient and block law; for
$m=1$, it is already covered by the invariance theorem.
\item For $\alpha=1+\delta$ with $\delta>0$ small, the exact first departure
from the critical orbit occurs at a computable index of order
$(2\delta^2)^{-1}$.
\end{enumerate}

In addition, the subcritical solution is proved to be identically one,
large parameters are shown to terminate in finite time, and any limiting
supercritical density in $(0,1)$ is forced to equal $1-\alpha^{-1}$.
Reproducible exact-arithmetic computations complement these results.

Only the subcritical and critical regimes, the finite shadowing theorem, and
the conditional slope constraint are proved here.  Global existence and
convergence in a supercritical interval remain open.

\begin{table}[htbp]
\centering
\small
\caption{Logical status of the principal statements.}
\label{tab:status}
\begin{tabularx}{\textwidth}{@{}
>{\raggedright\arraybackslash}p{0.35\textwidth}
>{\raggedright\arraybackslash}p{0.17\textwidth}
>{\raggedright\arraybackslash}X@{}}
\toprule
Statement & Status & Qualification \\
\midrule
Frozen solution for $0<\alpha<1$ & proved & Global and unconditional. \\
Triangular blocks at $\alpha=1$ & proved & Global; valid for every
floor-admissible aggregator. \\
Closed form, shift identity, and maximum-aggregator law & proved & Exact
formulas, including the transient. \\
Near-critical shadowing & proved & Exact first departure for
$0<\alpha-1<(2m-1)^{-1}$. \\
Supercritical slope constraint & conditionally proved & Assumes global
existence and an interior positive liminf and limsup. \\
Supercritical existence and convergence & open & Finite exact-arithmetic
computations provide evidence only. \\
\bottomrule
\end{tabularx}
\end{table}

The paper is organized as follows.  Section~\ref{sec:definition} introduces
the recursion and elementary phase bounds.  Section~\ref{sec:critical}
proves critical invariance for general aggregators.  Section~\ref{sec:super}
contains the shadowing and slope theorems.  Section~\ref{sec:numerics}
reports certified finite computations, and Section~\ref{sec:conclusion}
summarizes the open problems.

\section{Definition and elementary phase bounds}\label{sec:definition}

Fix an integer $m\ge1$ and a real parameter $\alpha>0$.  Set
\begin{equation}\label{eq:initial-data}
Q_{\alpha,m}(1)=\cdots=Q_{\alpha,m}(m)=1,
\end{equation}
and, for $n>m$, define
\begin{equation}\label{eq:main-recursion}
Q_{\alpha,m}(n)
=
1+\floor{
\frac{\alpha}{m}\sum_{j=1}^{m}
Q_{\alpha,m}\bigl(n-Q_{\alpha,m}(n-j)\bigr)
}.
\end{equation}
The recursion is well-defined at $n$ if
\begin{equation}\label{eq:admissible-indices}
1\le n-Q_{\alpha,m}(n-j)<n
\qquad (1\le j\le m),
\end{equation}
so every recursive argument refers to an already constructed term.

For $m=1$, equation~\eqref{eq:main-recursion} becomes
\[
Q_{\alpha,1}(n)
=1+\floor{\alpha
Q_{\alpha,1}\bigl(n-Q_{\alpha,1}(n-1)\bigr)}.
\]
At $\alpha=1$, this is equation~\eqref{eq:golomb-triangular}; the floor is
then redundant because its argument is an integer.

\subsection{The frozen phase}

\begin{proposition}\label{prop:subcritical}
If $0<\alpha<1$, then equation~\eqref{eq:main-recursion} is globally
well-defined and
\[
Q_{\alpha,m}(n)=1
\qquad (n\ge1).
\]
\end{proposition}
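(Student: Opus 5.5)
The plan is a strong induction on $n$ with the hypothesis $H(n)$: the recursion is well-defined at every index up to $n$ and $Q_{\alpha,m}(k)=1$ for all $1\le k\le n$. The base cases $n\le m$ hold by the initial data~\eqref{eq:initial-data}, and there is nothing to check for well-definedness there.

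For the inductive step, fix $n>m$ and assume $H(n-1)$. First we check the admissibility condition~\eqref{eq:admissible-indices}. For each $1\le j\le m$ we have $1\le n-j\le n-1$, so $Q_{\alpha,m}(n-j)=1$ by hypothesis. Hence every recursive argument equals $n-1$. Since $n>m\ge1$, we get $1\le n-1<n$, so~\eqref{eq:admissible-indices} holds and the recursion is well-defined at $n$.

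Next we evaluate the right-hand side of~\eqref{eq:main-recursion}. Each summand is $Q_{\alpha,m}(n-1)=1$, so the averaged sum equals $\frac{\alpha}{m}\cdot m=\alpha$. Because $0<\alpha<1$, we have $\floor{\alpha}=0$, and therefore $Q_{\alpha,m}(n)=1+0=1$. This establishes $H(n)$ and closes the induction.

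There is no real obstacle. The one point needing care is to carry well-definedness inside the inductive hypothesis, so that the recursive arguments are known to refer to already constructed terms. We should also note that the strict inequality $\alpha<1$ is exactly what forces the floor to vanish: at $\alpha=1$ the floor equals $1$, and the sequence leaves the frozen phase.
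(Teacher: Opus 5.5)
Your proof is correct and follows essentially the same route as the paper: induction on $n$, with all outer values equal to $1$, so every recursive argument is $n-1\in\{1,\ldots,n-1\}$ and $Q_{\alpha,m}(n)=1+\floor{\alpha}=1$. Well-definedness comes out of the same induction, as in the paper.
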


\begin{proof}
The initial values are one.  Suppose that $Q_{\alpha,m}(r)=1$ for every
$r<n$.  Then each outer value $Q_{\alpha,m}(n-j)$ equals one, every recursive
argument is $n-1$, and every term in the mean equals one.  Hence
\[
Q_{\alpha,m}(n)=1+\floor{\alpha}=1.
\]
The argument $n-1$ lies in $\{1,\ldots,n-1\}$, so the induction also proves
global well-definedness.
\end{proof}

\subsection{An elementary breakdown bound}

For large parameters, failure occurs immediately.

\begin{proposition}\label{prop:large-alpha-breakdown}
If $\alpha\ge m+1$, then the recursion fails to be well-defined at
$n=m+2$.
\end{proposition}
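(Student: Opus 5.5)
The plan is a direct computation of the first two recursive steps. At $n=m+1$ every outer value $Q_{\alpha,m}(m+1-j)$ with $1\le j\le m$ is an initial value, hence equal to one. So every recursive argument is $m$. It satisfies \eqref{eq:admissible-indices} and again points to an initial value, giving
\[
Q_{\alpha,m}(m+1)=1+\floor{\alpha}\ \ge\ 1+(m+1)=m+2,
\]
since $\alpha\ge m+1$ and $m+1$ is an integer.

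Next, examine $n=m+2$ and take $j=1$. The outer value is $Q_{\alpha,m}(m+1)\ge m+2$, so the recursive argument satisfies
\[
n-Q_{\alpha,m}(n-1)\le (m+2)-(m+2)=0<1 .
\]
This violates the lower bound in \eqref{eq:admissible-indices}, so the term $Q_{\alpha,m}\bigl(n-Q_{\alpha,m}(n-1)\bigr)$ refers to an index outside the constructed range, and the recursion is not well-defined at $n=m+2$.

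The only point requiring care is the bookkeeping at $n=m+1$. For $m\ge1$ all indices $m+1-j$ lie in $\{1,\dots,m\}$, so no earlier computed value intervenes; the case $m=1$ is included. The argument uses only $\floor{\alpha}\ge m+1$, and it is a single failing index $j=1$ that triggers breakdown. There is no genuine obstacle here; the main thing to check is that $n=m+1$ itself is well-defined, which holds because its arguments equal $m\ge1$.
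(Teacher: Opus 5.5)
Your proof is correct and follows the paper's argument: you get $Q_{\alpha,m}(m+1)=1+\floor{\alpha}\ge m+2$ from the all-ones initial block, and then note that the branch $j=1$ at $n=m+2$ requests the index $m+1-\floor{\alpha}\le 0$. Your explicit check that the step $n=m+1$ is itself well-defined, since its arguments equal $m$, is a detail the paper leaves implicit.
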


\begin{proof}
At the first recursive step all inner values equal one, so
\[
Q_{\alpha,m}(m+1)=1+\floor{\alpha}.
\]
At $n=m+2$, the branch $j=1$ requests the index
\[
m+2-Q_{\alpha,m}(m+1)
=m+1-\floor{\alpha}\le0.
\]
Thus condition~\eqref{eq:admissible-indices} fails.
\end{proof}

Proposition~\ref{prop:large-alpha-breakdown} gives only a coarse upper
bound.  The geometry of the full set of globally admissible parameters is
unknown and is not assumed here to be an interval.

\section{Critical invariance under general means}\label{sec:critical}

The critical proof uses only a local property of the arithmetic mean.  It is
therefore natural to isolate that property.

\begin{definition}\label{def:floor-admissible}
Let $m\ge1$.  A map
\[
\mathcal{M}:(0,\infty)^m\longrightarrow(0,\infty)
\]
is \emph{floor-admissible} if, for every integer $r\ge1$ and every
$\bm{x}\in\{r,r+1\}^m$ having at least one component equal to $r$,
\begin{equation}\label{eq:floor-lock}
r\le \mathcal{M}(\bm{x})<r+1.
\end{equation}
\end{definition}

Given a floor-admissible $\mathcal{M}$, define $Q_{\mathcal{M},m}$ by $m$
initial ones and
\begin{equation}\label{eq:aggregator-recursion}
Q_{\mathcal{M},m}(n)
=
1+\floor{
\mathcal{M}\bigl(Y_1(n),\ldots,Y_m(n)\bigr)
},
\end{equation}
where
\begin{equation}\label{eq:branches}
Y_j(n)
=
Q_{\mathcal{M},m}\bigl(n-Q_{\mathcal{M},m}(n-j)\bigr).
\end{equation}

The arithmetic mean is floor-admissible.  More generally, so is every finite
power mean
\[
M_p(x_1,\ldots,x_m)
=
\left(\frac1m\sum_{j=1}^{m}x_j^p\right)^{1/p},
\qquad p\in\R\setminus\{0\},
\]
together with the geometric mean $M_0=(\prod_jx_j)^{1/m}$ and the minimum
$M_{-\infty}=\min_jx_j$.  Indeed, each of these means lies between the
minimum and maximum of its inputs and is strictly below the maximum when the
inputs are not all maximal.  More explicitly, if
$\varphi:(0,\infty)\to\mathbb{R}$ is continuous and strictly monotone and
$w_j>0$ with $\sum_{j=1}^m w_j=1$, then the weighted quasi-arithmetic mean
\[
M_{\varphi,\bm w}(x_1,\ldots,x_m)
=
\varphi^{-1}\!\left(\sum_{j=1}^m w_j\varphi(x_j)\right)
\]
is strictly internal and hence floor-admissible.  For $m\ge2$, the maximum
$M_{+\infty}$ is not floor-admissible.  For $m=1$, it coincides with the
identity and is already covered by the theorem below.

For every $k\ge1$, put
\begin{equation}\label{eq:block-endpoints}
s_k=m+\frac{k(k-1)}2,
\qquad
e_k=m+\frac{k(k+1)}2-1,
\qquad
I_k=[s_k,e_k]\cap\mathbb{Z}.
\end{equation}
Thus $I_1=\{m\}$ is an auxiliary interval containing the final initial one;
it is not meant to replace the full initial block $\{1,\ldots,m\}$.
For $k\ge2$, the interval $I_k$ has length $k$.

\begin{theorem}[Critical aggregator invariance]\label{thm:aggregator}
Let $m\ge1$ and let $\mathcal{M}$ be floor-admissible.  Then
equation~\eqref{eq:aggregator-recursion} is globally well-defined and
\begin{equation}\label{eq:exact-blocks}
Q_{\mathcal{M},m}(n)=k
\quad\Longleftrightarrow\quad
n\in I_k
\qquad (k\ge2).
\end{equation}
In particular, each value $k\ge2$ occurs exactly $k$ consecutive times.
\end{theorem}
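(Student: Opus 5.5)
Write $Q=Q_{\mathcal M,m}$. The plan is a strong induction on $n\ge m+1$ with the hypothesis
\[
H(n):\quad Q(r)=1 \text{ for } r\le m, \qquad Q(r)=k \text{ for } r\in I_k,\ k\ge2,\ r<n .
\]
The intervals $I_k$ for $k\ge2$ tile $\{m+1,m+2,\dots\}$ consecutively, since $e_k+1=s_{k+1}$ and $s_2=m+1$. It therefore suffices to show $Q(n)=k$ for $n\in I_k$, and then \eqref{eq:exact-blocks} and the ``exactly $k$ consecutive times'' claim follow. Global well-definedness comes out of the same induction: each outer value satisfies $1\le Q(n-j)\le n-1$, so every argument $n-Q(n-j)$ lies in $[1,n-1]$.

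First I verify $Q(r)\le r-m+1$ for $r>m$, with equality only at $r=m+1$; this is immediate from the block formula. It gives $Q(n-j)\le n-j-m+1\le n-1$, hence admissibility \eqref{eq:admissible-indices}.

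The core step is the following. Fix $n\in I_k$ with $k\ge2$, and write $n=s_k+t$ with $0\le t\le k-1$. The outer indices $n-1,\dots,n-m$ lie in the blocks $I_{k-1}$, $I_k$ (or possibly lower), and the values $Q(n-j)$ are non-increasing in $j$. I expect the cleanest route is to show that every inner index $n-Q(n-j)$ lands in $I_{k-1}\cup I_k$ (or in the initial ones when $k=2$). Moreover, the branch $j=1$ should land exactly in $I_{k-1}$:
\begin{itemize}
\item If $t\ge1$, then $Q(n-1)=k$ and $n-k=s_k+t-k=s_{k-1}+t-1\in I_{k-1}$.
\item If $t=0$, then $Q(n-1)=k-1$ and $n-(k-1)=s_k-k+1=s_{k-1}$.
\end{itemize}
So $Y_1(n)=k-1$ in both cases.

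For general $j$ the outer value is $Q(n-j)\le Q(n-1)$. Hence $n-Q(n-j)\ge n-Q(n-1)\in I_{k-1}$, and $n-Q(n-j)\le n-1\in I_k\cup I_{k-1}$. Because $Q$ is non-decreasing by the induction hypothesis, $Y_j(n)\in\{k-1,k\}$ for every $j$, with $Y_1(n)=k-1$. Floor-admissibility with $r=k-1$ then gives $\lfloor\mathcal M(Y)\rfloor=k-1$, so $Q(n)=k$.

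The base case $k=2$ is handled the same way. For $n=m+1$ all branches point to initial ones, $\mathcal M(1,\dots,1)\in[1,2)$, and so $Q(m+1)=2$. For $n=m+2=e_2$, the branch $j=1$ gives $n-2=m\in I_1$, where $Q(m)=1$. Here I use the convention $Q=1$ on $I_1$ and below, so ``$\{k-1,k\}$'' reads as $\{1,2\}$.

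The main obstacle I anticipate is the bookkeeping at block boundaries: confirming that $Y_1(n)$ always equals $k-1$ (the case $t=0$ versus $t\ge1$ above), and that no branch lands below $I_{k-1}$ when $m$ is large relative to $k$. In that regime $n-j$ may reach deep into earlier blocks, or into the initial ones. The monotonicity argument covers this, since smaller outer values only push inner indices closer to $n$. The one point to check carefully is that monotonicity of $Q$ on $\{1,\dots,n-1\}$ is part of the induction hypothesis, which holds because values on the initial block equal $1$, the minimum.
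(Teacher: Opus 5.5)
Your proposal is correct and follows essentially the same route as the paper's proof. Both argue by induction through the blocks, show that every inner index $n-Q(n-j)$ lies in $I_{k-1}\cup I_k$, note that the $j=1$ branch always returns $k-1$, and then apply the floor-lock with $r=k-1$. The only cosmetic difference is that you bound the other branches using monotonicity, $Q(n-j)\le Q(n-1)$, whereas the paper uses the direct bound $Q(n-j)\le k$ (or $\le k-1$ when $a=0$) coming from the induction hypothesis.
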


\begin{proof}
Write $Q=Q_{\mathcal{M},m}$.  We use induction over the blocks $I_k$ and,
inside each block, induction from left to right.  Suppose that all values
before
\[
n=s_k+a,
\qquad 0\le a\le k-1,
\]
have been constructed and agree with the asserted blocks.  This includes
the initial step $k=2,a=0$.

For $1\le j\le m$, set
\[
r_j=Q(n-j),
\qquad
x_j=n-r_j.
\]
Since $n>m$, we have $n-j\ge1$, and since $n-j<n$, the outer value $r_j$
is already known.  Moreover, $1\le r_j\le k$.

We first locate $x_j$.  If $a=0$, then $n-j<s_k$, so $r_j\le k-1$ and
\[
x_j\ge s_k-(k-1)=s_{k-1}.
\]
If $a\ge1$, then $r_j\le k$ and
\[
x_j\ge s_k+a-k=s_{k-1}+a-1\ge s_{k-1}.
\]
In both cases,
\[
x_j\le n-1\le e_k.
\]
Since $e_{k-1}=s_k-1$, the intervals $I_{k-1}$ and $I_k$ are adjacent.
Therefore
\[
x_j\in I_{k-1}\cup I_k.
\]
Also $x_j<n$, so its value is already known, and hence
\begin{equation}\label{eq:two-level-branches}
Q(x_j)\in\{k-1,k\}.
\end{equation}

At least one branch has the lower value.  Take $j=1$.  If $a=0$, then
$n-1=e_{k-1}$ and
\[
n-Q(n-1)=s_k-(k-1)=s_{k-1}.
\]
If $a\ge1$, the inner induction gives $Q(n-1)=k$, so
\[
n-Q(n-1)=s_k+a-k=s_{k-1}+a-1\in I_{k-1}.
\]
Thus in both cases
\begin{equation}\label{eq:lower-branch}
Q\bigl(n-Q(n-1)\bigr)=k-1.
\end{equation}

The vector of branch values therefore belongs to $\{k-1,k\}^m$ and has at
least one component $k-1$.  Floor-admissibility gives
\[
k-1
\le
\mathcal{M}\bigl(Y_1(n),\ldots,Y_m(n)\bigr)
<k,
\]
and equation~\eqref{eq:aggregator-recursion} yields $Q(n)=k$.  This closes
both inductions.  The bounds $1\le x_j<n$ established at every step prove
global well-definedness at the same time.
\end{proof}

\begin{corollary}[Arithmetic critical recursion]\label{cor:arithmetic}
For every $m\ge1$, the recursion $Q_{1,m}$ is globally well-defined and the
value $k\ge2$ occurs exactly on $I_k$.
\end{corollary}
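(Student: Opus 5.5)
The plan is to deduce Corollary~\ref{cor:arithmetic} directly from Theorem~\ref{thm:aggregator}. It suffices to identify the arithmetic recursion $Q_{1,m}$ with $Q_{\mathcal{M},m}$ for a suitable floor-admissible $\mathcal{M}$. Take $\mathcal{M}=M_1$, the arithmetic mean $\mathcal{M}(x_1,\ldots,x_m)=\frac1m\sum_j x_j$. With $\alpha=1$, equation~\eqref{eq:main-recursion} reads
\[
Q_{1,m}(n)=1+\floor{\tfrac1m\textstyle\sum_{j=1}^m Q_{1,m}\bigl(n-Q_{1,m}(n-j)\bigr)},
\]
which is exactly equation~\eqref{eq:aggregator-recursion} with branch values~\eqref{eq:branches}. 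The two recursions also share the $m$ initial ones, so by induction on $n$ they define the same sequence wherever either is defined. In particular, global well-definedness of one is equivalent to that of the other, since condition~\eqref{eq:admissible-indices} involves only earlier values.

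The one thing to verify is that $M_1$ is floor-admissible in the sense of Definition~\ref{def:floor-admissible}, i.e.\ that it satisfies~\eqref{eq:floor-lock}. Let $\bm x\in\{r,r+1\}^m$ have $t\ge1$ components equal to $r$. Then
\[
M_1(\bm x)=r+\frac{m-t}{m}\in\bigl[r,\,r+1-\tfrac1m\bigr]\subset[r,r+1).
\]
The map also sends $(0,\infty)^m$ into $(0,\infty)$, so it is an admissible aggregator.

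Theorem~\ref{thm:aggregator} then gives global well-definedness and the block law~\eqref{eq:exact-blocks}: $Q_{1,m}(n)=k$ if and only if $n\in I_k$, for every $k\ge2$. No step here is a real obstacle. The only care needed is the bookkeeping point that the recursion with $\alpha=1$ and the aggregator recursion coincide term by term, including the shared initial data.
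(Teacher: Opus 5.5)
Your proposal is correct and matches the paper's route: the corollary is stated with no separate proof, as the specialization of Theorem~\ref{thm:aggregator} to the arithmetic mean, whose floor-admissibility the paper asserts just before that theorem. Your computation $M_1(\bm x)=r+(m-t)/m\in[r,r+1)$ for $t\ge1$ is the one-line check the paper leaves implicit.
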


\begin{corollary}[Closed form and shift identity]\label{cor:closed-form}
For $n>m$,
\begin{equation}\label{eq:closed-form}
Q_{1,m}(n)
=
\floor{\frac{1+\sqrt{1+8(n-m)}}2}.
\end{equation}
If $g$ denotes the canonical solution of
equation~\eqref{eq:golomb-triangular}, then
\begin{equation}\label{eq:shift-identity}
Q_{1,m}(m+r)=g(r+1)
\qquad (r\ge0).
\end{equation}
Consequently,
\begin{equation}\label{eq:critical-asymptotic}
Q_{1,m}(n)=\sqrt{2(n-m)}+O(1)\sim\sqrt{2n}.
\end{equation}
\end{corollary}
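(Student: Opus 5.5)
We derive the corollary from Corollary~\ref{cor:arithmetic}, applied with the arithmetic mean (which is floor-admissible). For $n>m$ there is a unique $k\ge2$ with $n\in I_k$, and then $Q_{1,m}(n)=k$. The case $k=1$ does not occur for $n>m$, because $I_1=\{m\}$. It therefore suffices to show that the right-hand side of~\eqref{eq:closed-form} equals $k$ exactly when $n\in I_k$.

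Put $t=n-m\ge1$. Membership $n\in I_k$ means
\[
\frac{k(k-1)}2\le t\le \frac{k(k+1)}2-1,
\qquad\text{i.e.}\qquad
k(k-1)\le 2t<k(k+1).
\]
Multiplying by $4$ and adding $1$, this is equivalent to
\[
(2k-1)^2\le 1+8t<(2k+1)^2 .
\]
Taking square roots, adding $1$ and halving gives
\[
k\le \frac{1+\sqrt{1+8t}}2<k+1.
\]
Hence the floor of the middle quantity is $k$. The chain is reversible, but uniqueness of $k$ already makes the converse unnecessary. This proves~\eqref{eq:closed-form}.

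For the shift identity~\eqref{eq:shift-identity}, we use that the canonical $g$ is exactly the case $m=1$. Indeed, $Q_{1,1}$ is the recursion~\eqref{eq:golomb-triangular} with $g(1)=1$, so $g=Q_{1,1}$. By~\eqref{eq:closed-form} with $m=1$, for $r\ge1$,
\[
g(r+1)=\floor{\frac{1+\sqrt{1+8r}}2}=Q_{1,m}(m+r).
\]
The case $r=0$ is checked directly: $Q_{1,m}(m)=1=g(1)$. One should note that "canonical solution" means the one produced by the recursion from $g(1)=1$, and this identification is the only conceptual point.

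Finally, for~\eqref{eq:critical-asymptotic}, we drop the floor at a cost of at most $1$ and use
\[
\sqrt{1+8t}=\sqrt{8t}+O(1).
\]
This gives $Q_{1,m}(n)=\sqrt{2t}+O(1)$, and since $m$ is fixed, $\sqrt{2(n-m)}\sim\sqrt{2n}$.

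No step is a genuine obstacle. The only care needed is with the endpoints in the inequality manipulation, namely the strict upper bound coming from $e_k=s_{k+1}-1$.
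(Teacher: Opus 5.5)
Your proposal is correct and follows essentially the same route as the paper. Both arguments invert the triangular block inequalities $k(k-1)\le 2(n-m)<k(k+1)$ from the critical block law, obtain the shift identity by recognizing $g$ as the $m=1$ case, and read off the asymptotic from the closed form. Your explicit identification $g=Q_{1,1}$ and your separate check of $r=0$ simply make precise what the paper states in one line.
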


\begin{proof}
Let $t=n-m\ge1$.  By Theorem~\ref{thm:aggregator}, $Q_{1,m}(n)=k$ if and
only if
\[
\frac{k(k-1)}2\le t<\frac{k(k+1)}2.
\]
Inverting these triangular inequalities gives
equation~\eqref{eq:closed-form}.  Equation~\eqref{eq:shift-identity} follows
because the same inequalities describe the canonical Golomb triangular
solution.  The asymptotic estimate is immediate.
\end{proof}

The independence of $m$ in equation~\eqref{eq:critical-asymptotic} is thus
not merely asymptotic.  The entire nontrivial part of the critical sequence
is an exact horizontal translate of the $m=1$ solution.

We will need a more precise form of the branch rigidity.  Put
\begin{equation}\label{eq:critical-branch-average}
B_m(n)
=
\frac1m\sum_{j=1}^{m}
Q_{1,m}\bigl(n-Q_{1,m}(n-j)\bigr).
\end{equation}

\begin{lemma}[Exact critical branch average]\label{lem:branch-average}
If $n=s_k+a$, where $k\ge2$ and $0\le a\le k-1$, then
\begin{equation}\label{eq:branch-count}
B_m(n)=k-1+\frac{c_{m,k,a}}m,
\end{equation}
where $c_{m,k,0}=0$ and, for $1\le a\le k-1$,
\begin{equation}\label{eq:c-count}
c_{m,k,a}
=
\max\left\{0,\,m+\frac{a(a-1)}2-\frac{k(k-1)}2\right\}.
\end{equation}
In particular,
\begin{equation}\label{eq:branch-collapse}
B_m(n)=k-1
\qquad (k\ge m+1).
\end{equation}
\end{lemma}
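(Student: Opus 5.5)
The plan is to reuse the branch analysis in the proof of Theorem~\ref{thm:aggregator}, now for the arithmetic mean with $\alpha=1$ (Corollary~\ref{cor:arithmetic}), and simply count how many branches take the upper value. Write $n=s_k+a$, $r_j=Q_{1,m}(n-j)$ and $x_j=n-r_j$. By \eqref{eq:two-level-branches} every branch value $Q_{1,m}(x_j)$ lies in $\{k-1,k\}$, and by the exact block description \eqref{eq:exact-blocks} it equals $k$ exactly when $x_j\ge s_k$. Hence $B_m(n)=k-1+c/m$, where $c$ is the number of $j\in\{1,\dots,m\}$ with $x_j\ge s_k$. It remains to compute $c$.

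For $a=0$ we have $x_j\le n-1=e_{k-1}<s_k$ for every $j$, so $c=0$. For $1\le a\le k-1$, split the branches into two groups.

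\begin{enumerate}[label=\textup{(\alph*)},leftmargin=2.2em]
\item If $j\le a$, then $n-j\in I_k$, so $r_j=k$ and $x_j=s_{k-1}+a-1\le e_{k-1}$. These branches give the lower value.
\item If $j>a$, then $x_j\ge s_k$ is equivalent to $Q_{1,m}(n-j)\le a$.
\end{enumerate}

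In case (b), by the block description, and using $Q_{1,m}=1$ on $\{1,\dots,m\}$ with $e_1=m$, the indices with value at most $a$ are exactly those $\le e_a=m+a(a+1)/2-1$. So the condition becomes $j\ge n-e_a=\tfrac{k(k-1)}2-\tfrac{a(a-1)}2+1$.

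Next I check that this threshold is compatible with the constraint $j>a$. Since $k\ge a+1$, we have $\tfrac{k(k-1)}2-\tfrac{a(a-1)}2\ge a$, so the threshold is at least $a+1$ and the condition $j>a$ is automatic. Also $n-j\ge1$ for $j\le m<n$. Counting the integers $j$ between the threshold and $m$ therefore gives exactly
\[
c=\max\left\{0,\,m-\frac{k(k-1)}2+\frac{a(a-1)}2\right\},
\]
which is \eqref{eq:c-count}.

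For \eqref{eq:branch-collapse}, note that for $0\le a\le k-1$ the difference $\tfrac{k(k-1)}2-\tfrac{a(a-1)}2$ is minimized at $a=k-1$, where it equals $k-1$. When $k\ge m+1$ this minimum is at least $m$, so the maximum in \eqref{eq:c-count} is $0$.

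The main obstacle is bookkeeping rather than any single inequality. One must handle the $a=0$ boundary and the initial block $\{1,\dots,m\}$ correctly, which is why $e_1=m$ makes the formula $e_a$ valid down to $a=1$. One must also confirm that branches with $j\le a$ never reach $I_k$, so they do not distort the count.
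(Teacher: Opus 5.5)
Your proof is correct and follows essentially the same route as the paper: both reduce to counting the $j\in\{1,\dots,m\}$ with $Q_{1,m}(n-j)\le a$, i.e.\ $n-j\le e_a$, and both obtain the collapse for $k\ge m+1$ by minimizing $\tfrac{k(k-1)}2-\tfrac{a(a-1)}2$ at $a=k-1$. Your split into $j\le a$ and $j>a$ is harmless extra bookkeeping, since the equivalence $x_j\ge s_k \iff Q_{1,m}(n-j)\le a$ already holds for every $j$.
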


\begin{proof}
The proof of Theorem~\ref{thm:aggregator} shows that every summand in
$B_m(n)$ is either $k-1$ or $k$.  For $a\ge1$, a summand equals $k$
precisely when
\[
Q_{1,m}(n-j)\le a.
\]
By the critical block law, $Q_{1,m}(t)\le a$ precisely when
$t\le e_a=m+a(a+1)/2-1$.  Thus the number of upper summands is the number of
integers in
\[
[n-m,n-1]\cap(-\infty,e_a],
\]
namely
\[
\max\left\{0,m+\frac{a(a+1)}2-\frac{k(k-1)}2-a\right\},
\]
which is equation~\eqref{eq:c-count}.  When $a=0$, no upper summand is
possible.  Finally, if $k\ge m+1$, then
\[
\frac{k(k-1)}2-\frac{a(a-1)}2\ge k-1\ge m,
\]
so $c_{m,k,a}=0$.
\end{proof}

\subsection{The boundary case of the maximum}

For $m\ge2$, the strict upper inequality in
Definition~\ref{def:floor-admissible} excludes the maximum.  The trivial case
$m=1$ is included below for completeness.  For $m\ge2$, the maximum does not
preserve the initial shift, but it still admits a complete description.  Let
$T_r=r(r+1)/2$.

\begin{theorem}[Maximum aggregator]\label{thm:maximum}
Let $H_m(1)=\cdots=H_m(m)=1$ and, for $n>m$, define
\begin{equation}\label{eq:max-recursion}
H_m(n)
=
1+\max_{1\le j\le m}H_m\bigl(n-H_m(n-j)\bigr).
\end{equation}
For $k\ge m+1$, put
\begin{equation}\label{eq:max-start}
u_k=2m+T_{k-1}-T_m.
\end{equation}
Then equation~\eqref{eq:max-recursion} is globally well-defined and
\begin{equation}\label{eq:max-explicit}
H_m(n)=
\begin{cases}
1,&1\le n\le m,\\
n-m+1,&m+1\le n\le2m-1,\\
k,&u_k\le n\le u_k+k-1,\quad k\ge m+1.
\end{cases}
\end{equation}
The middle range is empty for $m=1$.  Equivalently, for $n\ge2m$,
\begin{equation}\label{eq:max-closed-form}
H_m(n)
=
\floor{\frac{1+\sqrt{1+8(n-2m+T_m)}}2},
\end{equation}
and hence $H_m(n)\sim\sqrt{2n}$.
\end{theorem}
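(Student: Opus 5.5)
The plan is to mimic the block induction from the proof of Theorem~\ref{thm:aggregator}, after first computing the transient by hand.

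\emph{Transient.} Take $m\ge2$. At $n=m+1$ every outer value is $1$, so every argument equals $m$, and $H_m(m+1)=2$. I will show by induction that $H_m(n)=n-m+1$ for $m+1\le n\le 2m-1$. At such an $n$ the window $n-m,\dots,n-1$ contains at least one initial index $t\le m$ with $H_m(t)=1$; that branch requests the index $n-1$. All other branches request indices in $[m,n-1]$, where the values are increasing by the inductive hypothesis. Hence the maximum is attained at index $n-1$ and equals $n-m$, giving $H_m(n)=n-m+1$.

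Next consider $n=2m$. The window is $[m,2m-1]$ and still contains $m$, so the branch $j=m$ requests the index $2m-1$, whose value is $m$. This gives $H_m(2m)=m+1$. Since $u_{m+1}=2m+T_m-T_m=2m$, this is consistent with the third line of the formula. For $m=1$ the claim is the Golomb case, which is already covered by Theorem~\ref{thm:aggregator}; in fact $u_k=T_{k-1}+1=s_k$ when $m=1$.

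\emph{Block induction.} For $k\ge m+1$, let $J_k=[u_k,u_k+k-1]$. Since $u_{k+1}=u_k+k$, consecutive blocks are adjacent. Assume the formula holds below $n=u_k+a$, with $0\le a\le k-1$. The key claim is that for $k\ge m+1$, every outer value $r_j=H_m(n-j)$ lies in $\{k-1,k\}$. I will justify this by noting that the window has length $m\le k-1$, so it meets at most the blocks $J_{k-1}$ and $J_k$. The case $k=m+1$, $a<m$ is the exception: there the window dips into the transient, where the values are $m,m-1,\dots$. I will handle it separately, checking that the arguments $x_j=n-r_j$ then land in $[u_{m}',\,n-1]$, meaning the end of the transient and $J_{m+1}$.

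In the generic case, $x_j$ ranges over $n-k\le x_j\le n-k+1$ or so. I will show $x_j\ge u_{k-1}$ (or the index $2m-1$ when $k=m+1$) and $x_j\le n-1$, so each branch value lies in $\{k-1,k\}$. This is the analogue of \eqref{eq:two-level-branches}. With the maximum, what matters is whether some branch attains $k$. It does not: the largest argument comes from the smallest $r_j$, which is at least $k-1$. The bound $x_j\le n-k+1$ then gives $x_j\le u_k+a-k+1\le u_k-1+(a-k+2)$. Since $a\le k-1$, this reaches $J_k$ only when $a=k-1$. I need to check that edge carefully; the cleanest route is to observe that at $a=k-1$ the window lies entirely inside $J_k$ when $m\le k-1$, so $r_j=k$ and $x_j\le u_k-1$.

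Thus all branch values equal $k-1$ whenever $x_j\in J_{k-1}$, and the recursion gives $H_m(n)=k$. Well-definedness follows from $1\le x_j<n$, exactly as before.

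\emph{Closed form and main obstacle.} The closed form \eqref{eq:max-closed-form} follows by setting $t=n-2m+T_m$. Then $n\in J_k$ if and only if $T_{k-1}\le t<T_k$, and inverting these triangular inequalities as in Corollary~\ref{cor:closed-form} gives the formula, together with $H_m(n)\sim\sqrt{2n}$.

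The main obstacle is the index bookkeeping at the junction $k=m+1$, where the window overlaps the linear transient, and at the block edges $a=0$ and $a=k-1$. There I must verify directly that the maximum equals $k-1$ and never $k$. I will first try to dispose of these cases by writing out $x_j$ explicitly as $n-j$ offsets against $u_k$.
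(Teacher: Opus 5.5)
There is one genuine gap, and it sits in the case you defer: the junction $k=m+1$ with $1\le a\le m-1$. (The case $a=0$ is your computation of $H_m(2m)$.) There the outer values are spread over the transient, so branch collapse fails, and your sketch of how to handle it points in the wrong direction.

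\emph{Where the arguments actually land.} Write $n=2m+a$. For $j\le a$ one has $H_m(n-j)=m+1$ and $x_j=m+a-1$. For $j>a$ one has $H_m(n-j)=n-j-m+1$ and $x_j=j+m-1$. Hence every $x_j$ lies in $[m+a-1,2m-1]$, and the branch values genuinely differ; for $m=3$, $n=7$ they are $1,2,3$.

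\emph{Why the sketch fails.} If any argument landed in $J_{m+1}$, as your description allows, that branch would have value $m+1$ and the recursion would return $m+2$, contradicting the theorem.

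\emph{What must be shown.} First, all $x_j\le 2m-1$. Second, the branch $j=m$ requests exactly $n-H_m(m+a)=2m-1$. Third, since the values on $[1,2m-1]$ are nondecreasing, the maximum is $H_m(2m-1)=m$. This is the monotonicity argument you already used in the transient. It is also precisely the paper's reduction $\max_j h(n-h(n-j))=h(n-h(n-m))$. Relatedly, your lower bound ``or the index $2m-1$ when $k=m+1$'' is valid only for $a=m$.

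Everything else is correct: the transient, the reduction of $m=1$ to the Golomb case, the generic step for $k\ge m+2$ and for $k=m+1$, $a=m$, and the closed form.

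Your generic step takes a different route from the paper. You show that the window lies in $J_{k-1}\cup J_k$ and that every argument lies in $J_{k-1}$. So all $m$ branches equal $k-1$, the analogue of the branch collapse in Lemma~\ref{lem:branch-average}. This gives the extra information that past the junction the maximum plays no role, since all branches coincide.

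The paper instead verifies a candidate $h$ and uses its monotonicity once to reduce the maximum to the single branch $j=m$. It then needs only $n-h(n-m)\in[u_{k-1},u_k-1]$. That reduction treats the transient, the whole first block $n=2m+a$ with $0\le a\le m$, and the later blocks uniformly, which is why the junction causes it no trouble.
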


\begin{proof}
Let $h$ be the right-hand side of equation~\eqref{eq:max-explicit}.  It is
nondecreasing and satisfies $1\le h(t)\le t$.  Hence all recursive arguments
are admissible.  Since $h(n-j)\ge h(n-m)$ for $1\le j\le m$, monotonicity
also gives
\[
\max_{1\le j\le m}h\bigl(n-h(n-j)\bigr)
=h\bigl(n-h(n-m)\bigr).
\]
It remains to verify
\begin{equation}\label{eq:max-reduced}
h(n)=1+h\bigl(n-h(n-m)\bigr).
\end{equation}

For $n=m+r$, $1\le r\le m-1$, one has $h(n-m)=1$ and
$h(n-1)=r$, proving equation~\eqref{eq:max-reduced}.  Next write
$n=2m+a$, $0\le a\le m$.  Then $h(n-m)=a+1$, so the inner argument is
$2m-1$, where $h(2m-1)=m$.  This proves the first full block, of value
$m+1$.

Now take $k\ge m+2$ and write $n=u_k+a$, $0\le a\le k-1$.  Set
$x=n-h(n-m)$.  If $a=0$, then $h(n-m)\le k-1$ and
\[
x\ge u_k-(k-1)=u_{k-1}.
\]
If $a\ge1$, then $n-m<u_{k+1}$, so $h(n-m)\le k$ and again
$x\ge u_k+a-k\ge u_{k-1}$.

The first occurrence of the value $a+1$ is no later than $n-m$.  This is
immediate for $a<m$.  For $a\ge m$, it follows from
\[
u_k+a-m-u_{a+1}
=T_{k-1}-T_a+a-m\ge0;
\]
the right-hand side is minimized at $a=k-1$, where it equals $k-1-m$.
Consequently $h(n-m)\ge a+1$ and $x\le u_k-1$.  Thus
$x\in[u_{k-1},u_k-1]$, the block of value $k-1$, which proves
equation~\eqref{eq:max-reduced}.  Formula~\eqref{eq:max-closed-form} and the
asymptotic follow from $u_{k+1}=u_k+k$.
\end{proof}

\section{Near-critical shadowing and supercritical constraints}\label{sec:super}

We now return to the arithmetic recursion~\eqref{eq:main-recursion} with
$\alpha>1$.  The following theorem gives a rigorous bridge from the critical
orbit to supercritical dynamics.

\subsection{Exact first departure from criticality}

\begin{theorem}[Exact near-critical shadowing]\label{thm:shadowing}
Let $m\ge1$, write $\alpha=1+\delta$, and assume
\begin{equation}\label{eq:delta-small}
0<\delta<\frac{1}{2m-1}.
\end{equation}
Define
\begin{equation}\label{eq:departure-block}
h_\delta=\left\lceil\frac1\delta\right\rceil,
\qquad
K_\delta=h_\delta+1,
\qquad
n_\delta=s_{K_\delta}
=m+\frac{K_\delta(K_\delta-1)}2
=m+\frac{h_\delta(h_\delta+1)}2.
\end{equation}
Then $Q_{1+\delta,m}$ is well-defined through $n_\delta$ and
\begin{align}
Q_{1+\delta,m}(n)&=Q_{1,m}(n)
&& (1\le n<n_\delta),\label{eq:shadow-equality}\\
Q_{1+\delta,m}(n_\delta)&=K_\delta+1
&&\text{while }Q_{1,m}(n_\delta)=K_\delta.
\label{eq:first-departure}
\end{align}
Thus $n_\delta$ is the exact first departure time.  In particular,
\begin{equation}\label{eq:departure-scale}
n_\delta
=
\frac{1}{2\delta^2}+O_m(\delta^{-1})
\qquad (\delta\downarrow0).
\end{equation}
\end{theorem}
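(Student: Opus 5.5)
The plan is to compare $Q_{1+\delta,m}$ with the critical orbit term by term, using induction on $n$. The key observation is that the recursive step at $n$ uses only values at indices $<n$. So if $Q_{1+\delta,m}$ and $Q_{1,m}$ agree on $\{1,\ldots,n-1\}$, the two recursions request the same (admissible) arguments and produce the same branch values. In that case, with $B_m(n)$ as in \eqref{eq:critical-branch-average},
\[
Q_{1+\delta,m}(n)=1+\floor{(1+\delta)B_m(n)}.
\]
Well-definedness through $n_\delta$ is inherited from Corollary~\ref{cor:arithmetic}. It therefore suffices to show two things:
\[
\floor{(1+\delta)B_m(n)}=\floor{B_m(n)}=k-1 \quad\text{for } n=s_k+a<n_\delta,
\qquad
\floor{(1+\delta)B_m(n_\delta)}=K_\delta.
\]
The first gives \eqref{eq:shadow-equality}, and the second gives \eqref{eq:first-departure}. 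The block range $n<n_\delta=s_{K_\delta}$ is exactly $2\le k\le h_\delta$, together with the initial ones.

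The computation uses Lemma~\ref{lem:branch-average}, which gives $B_m(n)=k-1+c/m$ with $c=c_{m,k,a}$. The condition $(1+\delta)B_m(n)<k$ is
\[
\delta\,(m(k-1)+c)<m-c.
\]
This is monotone in $c$: the left side increases and the right side decreases. So I only need it at the largest admissible $c$. I split into two cases.

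\emph{Case $k\ge m+1$.} Here $c=0$ by \eqref{eq:branch-collapse}, and the condition reads $\delta(k-1)<1$. It holds because $k-1\le h_\delta-1<1/\delta$, by the definition of the ceiling.

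\emph{Case $2\le k\le m$.} From \eqref{eq:c-count}, the maximum of $c$ over $1\le a\le k-1$ occurs at $a=k-1$ and equals $m-k+1$. The condition becomes
\[
\delta<\frac{k-1}{(m-1)k+1}.
\]
The right side is increasing in $k$: its derivative has numerator $m>0$. So the binding case is $k=2$, giving exactly $\delta<1/(2m-1)$. This is where hypothesis \eqref{eq:delta-small} enters.

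For the departure at $n_\delta=s_{K_\delta}$ (so $a=0$), we have $c=0$ and $B_m(n_\delta)=h_\delta$. Then
\[
(1+\delta)h_\delta=h_\delta+\delta h_\delta.
\]
Since $h_\delta\ge 1/\delta$, we get $\delta h_\delta\ge1$. Since $h_\delta<1/\delta+1$, we get $\delta h_\delta<1+\delta<2$. Hence the floor is $h_\delta+1=K_\delta$, and $Q_{1+\delta,m}(n_\delta)=K_\delta+1$. Meanwhile $Q_{1,m}(n_\delta)=K_\delta$ by Theorem~\ref{thm:aggregator}, so $n_\delta$ is the first departure.

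Finally, \eqref{eq:departure-scale} follows from $h_\delta=\delta^{-1}+O(1)$, which gives
\[
n_\delta=m+\frac{h_\delta(h_\delta+1)}{2}=\frac{1}{2\delta^2}+O_m(\delta^{-1}).
\]

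The main obstacle is the low-block case $k\le m$, where upper branches occur, i.e.\ $c>0$. There the maximization over $a$ and the monotonicity in $k$ must be handled carefully so that the constant $1/(2m-1)$ comes out sharp. The rest is bookkeeping with the ceiling.
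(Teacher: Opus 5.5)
Your proposal is correct and follows essentially the same route as the paper. Both arguments induct on agreement with the critical orbit and use Lemma~\ref{lem:branch-average} to reduce to the inequality $(1+\delta)B_m(s_k+a)<k$. For $2\le k\le m$ the worst case is $c=m-k+1$ at $k=2$, which gives exactly $1/(2m-1)$; for $m+1\le k\le h_\delta$ the ceiling gives $\delta(k-1)<1$; and at $n_\delta$ one has $1\le\delta h_\delta<1+\delta<2$. Your explicit monotonicity checks in $c$ and in $k$ simply spell out the chain of inequalities the paper states in \eqref{eq:early-margin}.
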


\begin{proof}
We verify the critical blocks in order.  Suppose that all preceding
supercritical values agree with the critical values.  Then the recursive
arguments and their unscaled arithmetic mean are exactly those in
Lemma~\ref{lem:branch-average}.

For $2\le k\le m$, write $B_m(s_k+a)=k-1+c/m$.  From
equation~\eqref{eq:c-count} and $a\le k-1$ we have
$0\le c\le m-k+1$.  Consequently,
\begin{equation}\label{eq:early-margin}
\frac{k-B_m(s_k+a)}{B_m(s_k+a)}
=
\frac{m-c}{m(k-1)+c}
\ge
\frac{k-1}{k(m-1)+1}
\ge
\frac1{2m-1}.
\end{equation}
Thus $(1+\delta)B_m(s_k+a)<k$.  Since
$B_m(s_k+a)\ge k-1$, its scaled floor is $k-1$, and the supercritical value
remains $k$.

For $k\ge m+1$, Lemma~\ref{lem:branch-average} gives
$B_m(s_k+a)=k-1$.  If $k<K_\delta$, then $k\le h_\delta$, so
\[
\delta(k-1)\le\delta(h_\delta-1)<1.
\]
It follows that $k-1\le(1+\delta)(k-1)<k$, and again the value remains $k$.
This proves agreement at every index preceding $s_{K_\delta}$ and also
proves well-definedness there.

At $n=s_{K_\delta}$, all earlier values still agree and the branch average
is $K_\delta-1=h_\delta$.  Hence
\[
Q_{1+\delta,m}(n)
=1+\floor{(1+\delta)h_\delta}
=K_\delta+\floor{\delta h_\delta}.
\]
By the definition of $h_\delta$,
$1\le\delta h_\delta<1+\delta<2$, so the last floor equals one.  Finally,
$h_\delta=\delta^{-1}+O(1)$ gives
equation~\eqref{eq:departure-scale}.
\end{proof}

\begin{remark}
The constant $(2m-1)^{-1}$ is sharp for complete shadowing.  For $m\ge2$,
at equality the value at $n=m+2$ is already $3$, whereas the critical value
is $2$.  For $m=1$, equality means $\delta=1$ and departure occurs at $n=2$.
\end{remark}

Theorem~\ref{thm:shadowing} shows why finite computations near $\alpha=1$
have exceptionally long critical transients.  It does not imply global
existence after the first departure.

\subsection{A floor-aware continuum heuristic}

Let
\[
A(n)=\frac1m\sum_{j=1}^{m}
Q_{\alpha,m}\bigl(n-Q_{\alpha,m}(n-j)\bigr).
\]
The recurrence can be written exactly as
\begin{equation}\label{eq:floor-defect}
Q_{\alpha,m}(n)=\alpha A(n)+\eta_n,
\qquad
\eta_n=1-\{\alpha A(n)\}\in(0,1],
\end{equation}
where $\{\cdot\}$ denotes fractional part.  This bounded term must be kept
when leading contributions cancel.

If $Q(n)\approx q(n)$ is slowly varying and the branches are asymptotically
equivalent, then
\[
A(n)\approx q(n-q(n))
\approx q(n)-q'(n)q(n).
\]
Writing $\alpha=1+\delta$ and replacing $\eta_n$ by an effective bounded
forcing term $\eta$ gives
\begin{equation}\label{eq:floor-aware-ode}
\alpha q(n)q'(n)\approx \delta q(n)+\eta.
\end{equation}
At criticality, Lemma~\ref{lem:branch-average} makes $A(n)$ integral for all
large $n$, hence $\eta_n=1$ exactly; equation~\eqref{eq:floor-aware-ode}
then recovers $q(n)\sim\sqrt{2n}$.  Above criticality the distribution of
$\eta_n$ is part of the dynamics, not a lower-order detail.  We therefore
use this calculation only as a guide, not as a proof of a scaling constant.

\subsection{The only possible limiting slope}

A linear ansatz $Q_{\alpha,m}(n)\sim cn$ with $0<c<1$ gives
\[
Q_{\alpha,m}\bigl(n-Q_{\alpha,m}(n-j)\bigr)
\sim c(1-c)n.
\]
Substitution into the recursion forces
\[
c=\alpha c(1-c),
\qquad
c=1-\frac1\alpha.
\]
The following proposition makes the necessity of this candidate rigorous
without assuming convergence in advance.

\begin{proposition}[Asymptotic slope constraint]\label{prop:slope}
Let $\alpha>1$, and suppose that equation~\eqref{eq:main-recursion} is
globally well-defined.  Put
\[
\ell=\liminf_{n\to\infty}\frac{Q_{\alpha,m}(n)}n,
\qquad
L=\limsup_{n\to\infty}\frac{Q_{\alpha,m}(n)}n.
\]
If $0<\ell\le L<1$, then
\begin{equation}\label{eq:slope-bracket}
\ell\le1-\frac1\alpha\le L.
\end{equation}
Consequently, if $Q_{\alpha,m}(n)/n$ converges to a limit in $(0,1)$, that
limit equals $1-\alpha^{-1}$.
\end{proposition}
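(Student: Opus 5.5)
The plan is to work directly with the exact floor-defect form \eqref{eq:floor-defect}:
\[
\alpha A(n)\le Q_{\alpha,m}(n)\le \alpha A(n)+1 .
\]
We then bound the branch values $Q(x_j)$, where $x_j=n-Q(n-j)$, from above and from below using $\ell$ and $L$. Write $Q=Q_{\alpha,m}$ and fix $\varepsilon>0$ small, with $\varepsilon<\ell$ and $L+\varepsilon<1$. By definition of $\ell$ and $L$ there is $N$ such that
\[
(\ell-\varepsilon)t\le Q(t)\le (L+\varepsilon)t \qquad (t\ge N).
\]
Since $j\le m$ is fixed, $Q(n-j)$ obeys the same bounds with $t=n-j=n+O(1)$. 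Consequently, for large $n$ and every $j$,
\[
(1-L-\varepsilon)n-O(1)\le x_j\le (1-\ell+\varepsilon)n+O(1).
\]
Because $L<1$, the lower bound shows $x_j\to\infty$ uniformly in $j$. Hence the bounds for $Q$ also apply at the arguments $x_j$ once $n$ is large.

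For the upper inequality, choose a subsequence $n_i\to\infty$ with $Q(n_i)/n_i\to L$. Each summand of $A(n)$ satisfies
\[
Q(x_j)\le (L+\varepsilon)x_j\le (L+\varepsilon)(1-\ell+\varepsilon)n+O(1),
\]
so $A(n)$ obeys the same bound. Inserting this into $Q(n)\le\alpha A(n)+1$, dividing by $n_i$, letting $i\to\infty$ and then $\varepsilon\downarrow0$ gives
\[
L\le \alpha L(1-\ell).
\]
Since $L\ge\ell>0$, we may divide by $L$ to get $1\le\alpha(1-\ell)$, which is $\ell\le 1-\alpha^{-1}$.

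For the lower inequality, argue symmetrically along a subsequence realizing $\ell$. There
\[
Q(x_j)\ge(\ell-\varepsilon)x_j\ge(\ell-\varepsilon)(1-L-\varepsilon)n-O(1).
\]
Using $Q(n)\ge\alpha A(n)$ gives $\ell\ge\alpha\ell(1-L)$. Dividing by $\ell>0$ yields $1\ge\alpha(1-L)$, which is $L\ge1-\alpha^{-1}$. If $Q(n)/n$ converges to some $c\in(0,1)$, then $\ell=L=c$, and \eqref{eq:slope-bracket} forces $c=1-\alpha^{-1}$.

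The only delicate point is uniformity: the asymptotic bounds must be applied at the inner arguments $x_j$, which requires $x_j\to\infty$. This is exactly where the hypothesis $L<1$ enters. The hypothesis $\ell>0$ is used only to divide out $L$ and $\ell$ in the final step. The additive defect $\eta_n\in(0,1]$ and the shifts $n-j$ contribute only $O(1)$, which vanishes after dividing by $n$.
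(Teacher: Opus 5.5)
Your proof is correct and follows the paper's argument essentially step for step: the same $\varepsilon$-bounds, the same two-sided control of $x_j(n)=n-Q_{\alpha,m}(n-j)$ (with $L<1$ forcing $x_j(n)\to\infty$), and the same limiting inequalities $\ell\ge\alpha\ell(1-L)$ and $L\le\alpha L(1-\ell)$. Working along subsequences that realize $L$ and $\ell$, instead of taking liminf and limsup directly, and writing the floor explicitly as $\alpha A(n)\le Q_{\alpha,m}(n)\le\alpha A(n)+1$ are only cosmetic differences.
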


\begin{proof}
Choose
\[
0<\varepsilon<\min\{\ell,1-L\}.
\]
For all sufficiently large $r$,
\begin{equation}\label{eq:lim-bounds}
(\ell-\varepsilon)r
\le Q_{\alpha,m}(r)
\le (L+\varepsilon)r.
\end{equation}
For $1\le j\le m$, set
\[
x_j(n)=n-Q_{\alpha,m}(n-j).
\]
Since $m$ is fixed, equation~\eqref{eq:lim-bounds} gives, uniformly in $j$,
\begin{equation}\label{eq:x-bounds}
(1-L-\varepsilon)n+O(1)
\le x_j(n)
\le(1-\ell+\varepsilon)n+O(1).
\end{equation}
In particular, $x_j(n)\to\infty$.  Applying
equation~\eqref{eq:lim-bounds} at $x_j(n)$ yields
\begin{align*}
Q_{\alpha,m}(x_j(n))
&\ge
(\ell-\varepsilon)(1-L-\varepsilon)n+O(1),\\
Q_{\alpha,m}(x_j(n))
&\le
(L+\varepsilon)(1-\ell+\varepsilon)n+O(1).
\end{align*}
Average over $j$, multiply by $\alpha$, and use the fact that both the
leading $1$ and the floor contribute only $O(1)$.  After division by $n$,
taking liminf and limsup and then letting $\varepsilon\downarrow0$, we get
\[
\ell\ge\alpha\ell(1-L),
\qquad
L\le\alpha L(1-\ell).
\]
Since $\ell,L>0$, division gives
\[
L\ge1-\frac1\alpha,
\qquad
\ell\le1-\frac1\alpha,
\]
which is equation~\eqref{eq:slope-bracket}.
\end{proof}

Proposition~\ref{prop:slope} identifies the only possible limiting density
in $(0,1)$.  It proves neither global existence nor convergence and does not
cover the boundary densities zero and one.

\begin{conjecture}\label{conj:supercritical}
For every fixed $m\ge2$, there exists $\varepsilon_m>0$ such that, for every
$1<\alpha<1+\varepsilon_m$, equation~\eqref{eq:main-recursion} is globally
well-defined and
\[
\frac{Q_{\alpha,m}(n)}n
\longrightarrow
1-\frac1\alpha.
\]
\end{conjecture}

The conjecture includes the assertion that the stated parameter set is an
interval; neither that interval structure nor its endpoint is presently
known.  We make no corresponding exclusion claim for $m=1$.

\section{Certified finite computations}\label{sec:numerics}

This section reports finite computations only.  It supplies evidence for
Conjecture~\ref{conj:supercritical}, not a proof.

For a rational parameter $\alpha=a/b$ in lowest terms, every recursive
update was evaluated using integer arithmetic:
\begin{equation}\label{eq:exact-update}
Q_{a/b,m}(n)
=
1+
\floor{
\frac{a}{bm}\sum_{j=1}^{m}
Q_{a/b,m}\bigl(n-Q_{a/b,m}(n-j)\bigr)
}.
\end{equation}
No floating-point number enters the recurrence or the breakdown test.  A
run stops if any requested index lies outside $\{1,\ldots,n-1\}$.

Table~\ref{tab:slopes} gives results for $m=3$ at $N=200{,}000$.  All four
runs remained well-defined through $N$.

\begin{table}[htbp]
\centering
\caption{Exact-arithmetic finite computations for $m=3$ at
$N=200{,}000$.  The final column is a finite-run status, not a claim of
global existence.}
\label{tab:slopes}
\begin{tabular}{@{}ccccc@{}}
\toprule
$\alpha$ & exact value & $1-\alpha^{-1}$ & $Q(N)/N$ & status \\
\midrule
$1.05$ & $21/20$ & $0.047619$ & $0.048020$ & no failure to $N$ \\
$1.10$ & $11/10$ & $0.090909$ & $0.091145$ & no failure to $N$ \\
$1.25$ & $5/4$   & $0.200000$ & $0.200130$ & no failure to $N$ \\
$1.50$ & $3/2$   & $0.333333$ & $0.333420$ & no failure to $N$ \\
\bottomrule
\end{tabular}
\end{table}

To probe the dependence on the averaging length, Table~\ref{tab:cross-m}
repeats three parameters for $m=2,3,4,5$.  Every one of these twelve runs
also remained well-defined through $N=200{,}000$.

\begin{table}[htbp]
\centering
\small
\caption{Cross-$m$ exact-arithmetic check at $N=200{,}000$.  Entries are
$Q_{\alpha,m}(N)/N$; no run failed before the cutoff.}
\label{tab:cross-m}
\begin{tabular}{@{}cccc@{}}
\toprule
$m$ & $\alpha=11/10$ & $\alpha=5/4$ & $\alpha=3/2$ \\
\midrule
$2$ & $0.091140$ & $0.200110$ & $0.333525$ \\
$3$ & $0.091145$ & $0.200130$ & $0.333420$ \\
$4$ & $0.091145$ & $0.200115$ & $0.333400$ \\
$5$ & $0.091145$ & $0.200110$ & $0.333420$ \\
\midrule
$1-\alpha^{-1}$ & $0.090909$ & $0.200000$ & $0.333333$ \\
\bottomrule
\end{tabular}
\end{table}

Figure~\ref{fig:certified} displays the same runs on a logarithmic horizontal
axis.  Dashed horizontal lines mark the forced candidate slopes from
Proposition~\ref{prop:slope}.  The slow finite-size drift is most visible at
$\alpha=21/20$, in agreement with the exact shadowing scale from
Theorem~\ref{thm:shadowing}.

\begin{figure}[htbp]
\centering
\includegraphics[width=0.94\textwidth]{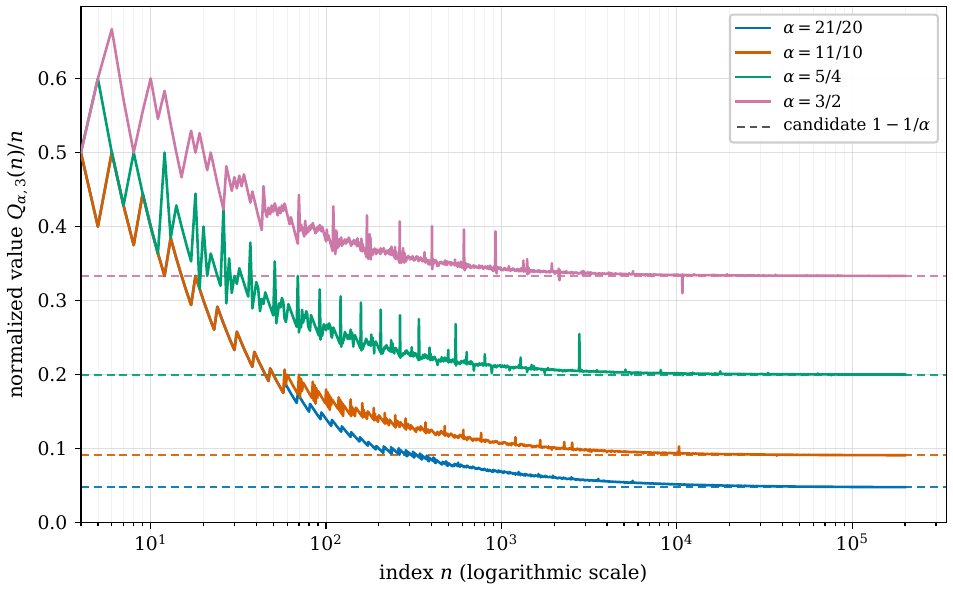}
\caption{Exact-rational computations of $Q_{\alpha,3}(n)/n$ through
$N=200{,}000$.  Solid curves are the computed ratios; dashed lines are the
candidate limits $1-\alpha^{-1}$.  Only a logarithmically thinned set of
computed points is drawn.}
\label{fig:certified}
\end{figure}

The script \texttt{compute\_experiments.py}, distributed with the source,
reproduces Tables~\ref{tab:slopes} and~\ref{tab:cross-m}, the plotted data,
the failure checks, and Figure~\ref{fig:certified}.  The reported files were
generated with Python~3.14.4 and Matplotlib~3.10.9 by running
\[
\texttt{python compute\_experiments.py}
\]
in the source directory.  Before the main experiment, the script performs
exact regression tests of Theorem~\ref{thm:aggregator} for the arithmetic,
quadratic, harmonic, minimum, and a positively weighted quasi-arithmetic
mean.  It also checks the critical closed form and a deliberately triggered
large-parameter breakdown.  All recurrence, floor, aggregator-test, and
breakdown decisions in these checks are integer or rational operations.

The apparent distinction between positive and nonpositive power means
reported in v1 was a floating-point artifact.
Theorem~\ref{thm:aggregator} corrects and supersedes that numerical claim.
Floating evaluation immediately before a floor is unsafe at integer
boundaries.

\section{Conclusions and open problems}\label{sec:conclusion}

Finite arithmetic averaging preserves the canonical solution of Golomb's
triangular recursion exactly: after the initial block, changing $m$ only
translates the sequence.  More generally, this invariance is a consequence
of the local floor-lock condition~\eqref{eq:floor-lock}, not of the arithmetic
mean specifically.  Thus harmonic, geometric, negative-power, and other
strictly internal means belong to the same exact critical class.  For
$m\ge2$, the maximum lies outside that class but has the explicit shifted
block law of Theorem~\ref{thm:maximum}, with the same leading square-root
growth.  For $m=1$, it coincides with the identity aggregator and is already
covered by Theorem~\ref{thm:aggregator}.

The parameter $\alpha=1$ is rigorously the boundary of the frozen solution.
Theorem~\ref{thm:shadowing} further shows that a slightly supercritical orbit
shadows the critical sequence for a time of order $(\alpha-1)^{-2}$.  Beyond
that time, global existence and convergence remain open.  The numerical
evidence is compatible with a linear regime, while
Proposition~\ref{prop:slope} determines its only possible limiting slope.

We close with five concrete problems.
\begin{enumerate}[leftmargin=2.1em]
\item \textbf{Supercritical existence and convergence.}
Prove or disprove Conjecture~\ref{conj:supercritical}; ideally obtain a
quantitative error term around $(1-\alpha^{-1})n$.

\item \textbf{Parameter survival geometry.}
For fixed $m$, classify the set of $\alpha$ for which the recursion is
global.  In particular, determine whether the component immediately to the
right of $1$ is nontrivial and whether the full survival set is an interval.

\item \textbf{The floor crossover.}
Study the double scaling $\alpha=1+\delta$ and
$n\asymp\delta^{-2}$.  The exact critical shadow ends on this scale, and the
subsequent fractional parts in equation~\eqref{eq:floor-defect} may govern a
nontrivial transition and logarithmic corrections.

\item \textbf{Boundary aggregators.}
Classify non-strict aggregators not covered by
Definition~\ref{def:floor-admissible}, including medians, quantiles, and means
with zero weights.  Determine which preserve the standard shifted blocks,
which admit alternative exact laws as the maximum does for $m\ge2$, and which
preserve only square-root growth.

\item \textbf{Initial data and perturbations.}
Determine whether the critical frequency law is stable under changes of the
initial block or bounded perturbations, even when monotonicity and contiguous
blocks are lost.
\end{enumerate}

\section*{Data and code availability}

The exact-arithmetic reproducibility archive for
Section~\ref{sec:numerics} is published as version 1.0.0 on Zenodo
\cite{mantovanelli-code}.  It contains the script
\nolinkurl{compute_experiments.py}, the CSV summary, the vector figure,
machine-readable citation metadata, and an open-source license.  The script
regenerates the reported outputs without stochastic input.

\section*{Acknowledgments}

OpenAI language-model tools were used for assistance with LaTeX formatting,
language editing, literature discovery, and code generation.  The author is
responsible for verifying the mathematical arguments, computations, and
final text.

\begingroup
\footnotesize
\setlength{\bibsep}{1pt plus 0.2pt}
\bibliographystyle{plainnat}
\bibliography{references}
\endgroup

\end{document}